\newtheorem{proposition}{Proposition}
\newtheorem*{definition*}{Definition}
\newtheorem{lemma}{Lemma}[section]
\newtheorem{remark}{Remark}
\newcolumntype{C}{>{\centering\arraybackslash}X} 
\def\lddots{\mathinner{\mkern1mu\raise1pt\hbox{.}\mkern2mu  
\raise4pt\hbox{.}\mkern2mu\raise7pt\vbox{\kern7pt\hbox{.}}\mkern1mu}}
\def\numberbysection{\@addtoreset{equation}{section}
 \def\theequation{\thesection.\arabic{equation}}}
\newcommand{\be}{\begin{eqnarray}}  
\newcommand{\ee}{\end{eqnarray}}
 \title{\bf Analytic inversion of a Radon transform on double circular arcs with applications in Compton Scattering Tomography}
\author{ \textsf{C\'ecilia Tarpau$^{a,b,c}$,}
\textsf{Javier Cebeiro$^d$,}
\textsf{Mai K. Nguyen$^a$,}
\textsf{Genevi\`eve Rollet$^b$}
\textsf{and Marcela Morvidone$^d$}
\\
\textit{\footnotesize $^{a}$ETIS (CNRS UMR 8051), Universit\'e de Cergy-Pontoise, F-95302 Cergy-Pontoise, France} 
\\
\textit{\footnotesize $^{b}$ LPTM (CNRS UMR 8089),Universit\'e de Cergy-Pontoise, F-95302 Cergy-Pontoise, France} 
\\
\textit{\footnotesize $^{c}$ LMV (CNRS UMR 8100),Universit\'e de Versailles Saint-Quentin, F-78035 Versailles, France}
\\ 
\textit{\footnotesize $^{d}$ CEDEMA, Universidad Nacional de San Mart\'in, B-1650 San Mart\'in, Argentina} \\}
\date{}
\begin{document}

\maketitle
\thispagestyle{empty}
\abstract{{\footnotesize In this work we introduce a new Radon transform which arises from a new modality of Compton Scattering Tomography (CST). This new system is made of a single detector rotating around a fixed source.  Unlike some previous CST, no collimator is used at the detector. Such a system allows us to collect scattered photons coming from two opposite sides of the source-detector segment, hence the manifold of the associated Radon transform is a family of double circular arcs. As first main theoretical result, an analytic inversion formula is established for this new Radon transform. This is achieved through the formulation of the transform in terms of circular harmonic expansion satisfying the consistency conditions in Cormack's sense. Moreover, a fast and efficient numerical implementation via an alternative formulation based on Hilbert transform is carried out. Simulation results illustrate the theoretical feasibility of the new system.  From a practical point of view, an uncollimated detector system considerably increases the amount of collected data, which is particularly significant in a scatter imaging system. } }

\section{Introduction}
\label{sec:introduction}
The seminal works of Radon \cite{radon17} and Cormack \cite{cormack63} on the classical Radon transform on straight lines were followed by several studies attempting to generalize this integral transform to various manifolds in two and three dimensions. Many of these generalizations are formulations in either full or partial circular paths, hence they are called Circular Radon Transforms (CRTs) and Circular Arc Radon Transforms (CARTs) respectively. A major part of these works concerns inversion formulas of CRTs. One can refer for example to the work of Cormack on the family of circles passing through the origin \cite{cormack81, cormack84}, the works of Ambartsoumian \cite{ambartsoumian2010inversion} and Haltmeier \cite{haltmeier2007thermoacoustic} on circles centered on a circle for thermoacoustic and photoacoustic tomography or the work of Redding \cite{redding2001inverting} about a CRT with applications in synthetic aperture radar imaging. There is also the work of Palamodov \cite{palamodov2000halfcircles} on an inversion formula to reconstruct objects from their data over a family of half-circles with applications in seismic tomography and synthetic aperture radar. About Radon transform on circular arcs, Nguyen and Truong \cite{nguyen10, truong2011radon} proposed the inversion Radon transforms on different families of circular arcs modelling data acquisition and image reconstruction of new modalities of Compton Scattering Tomography (CST). Syed \cite{syed2016numerical} proposed a numerical inversion for circular arcs with a fixed angular span with applications in photoacoustic tomography. Other aspects of CRTs and CARTs are also studied such as injectivity and range conditions. The reader can refer respectively to \cite{Ambartsoumian_2005, agranovsky1996injectivity} and \cite{agranovsky2007range, ambartsoumian2006range, finch2006range} for these issues.  

In this paper we introduce a new Radon transform on a family of double circular arcs. This study is motivated by the proposition of a new Compton scattering tomography. We will now the general functioning principle of CST modalities.  

\subsection{Compton Scattering Tomography (CST)}
In Compton scattering tomography, the objective is to use radiation scattered according to Compton effect to reconstruct the electron density map of the object. This type of imaging has been early studied especially by Lale \cite{lale1959examination}, Clarke \cite{clarke1969compton} and by Farmer \cite{farmer1971new}. The reader can refer to \cite{livre} for a more detailed history on CST systems.

Conventional Computed Tomography (CT) uses only primary (transmitted) radiation and considers scattered radiation as noise. CST is a \textit{novel} approach that takes advantage of scatter radiation to scan objects. Images generated in that way are claimed to offer better performance in medical imaging particularly to identify lung tumors \cite{jones2018characterization, redler2018compton}. In fact, if a photon emitted by a source with energy $E_0$ finds an electron on its path inside matter, this photon is scattered subtending an angle $\omega$with its original direction. Its energy $E(\omega)$ after collision is given by the Compton formula
\begin{equation}
     E(\omega) = \frac{E_0}{1+\frac{E_0}{m\,c^2}\left(1-\cos(\omega)\right)},
\end{equation}
with $m$ the electron mass and $c$ the speed of light. 
When a single scattered photon outgoing from the object with an energy $E(\omega)$ is collected by a detector, this one-to-one correspondence between energy and angle ensures that the photon was scattered on a circular arc. This circular arc passes through the source and the detector and is labeled by the scattering angle $\omega$. 
Consequently, assuming Compton effect is the only source of energy attenuation for emitted radiation and considering only first order scattering, the modelling of data measurement with modalities of Compton scatter tomography leads to CARTs on different families of circular arcs according to the chosen geometry of the system. These assumptions are the basis for mathematical tractability \cite{nguyen10, webber2018three, webber2019microlocal, cebeiro, truong2019compton, tarpau19trpms, tarpau2020compton, cebeiro18, cebeiro2017new} and some of their practical implications are discussed at the end of the paper. The inversion of these Radon transforms represents the theoretical challenge raised by CST modalities, required for image reconstruction. 

The first CST system whose data acquisition has been modelled by a Radon transform was proposed by Norton \cite{norton94}. This modality is made of a fixed source and a line of detectors. Image acquisition is performed on half circular arcs having a fixed common end-point in the source and the other ends on the straight line of detectors. The reader can also refer to \cite{num_inv} where Rigaud gave another numerical inversion method for the Radon transform associated to Norton's modality. The second CST modality, proposed by Nguyen and Truong \cite{nguyen10, num_inv, rigaudIEEE13}, is composed of a pair source - detector diametrically opposed on a circle. This system has lead to a CART based on circular arcs having a chord of fixed length. A third modality has been proposed recently by us \cite{tarpau19trpms, tarpau2020compton, cebeiro18}. This modality is composed of a fixed source and a detector array on a ring. This one has led to a new CART having a fixed common end-point in the source and the other ends on the detector ring. A system with a similar geometry has been studied by Rigaud \cite{rigaud_rotation_free2017}. He proposed in this work a reconstruction algorithm of the singular support of the object under study.

Three dimensional CST systems have also been proposed. The reader can refer to the works of Webber on the extension of the Nguyen and Truong's system \cite{webber2018three} and on the proposition of a system having a translational geometry \cite{webber2020compton} or to the work of Rigaud \cite{rigaud20183d} who proposed several potential 3D modalities associated to a general class of toric Radon transforms. 

\subsection{A new CST system with a collimation-free detector }

\label{sec:A_new_CST}

The context of this study is the introduction of a new two-dimensional CST system. This modality is based on a fixed source $S$, assumed to be monochromatic and a detector $D$ moving on a circle of radius $R$ around the source (see Fig. \ref{fig:paramCST4}), and localized by its angular position $\varphi$. Hence, $D$ can be defined by its Cartesian coordinates as $D(\varphi) = R\,(\cos{\varphi}, \sin{\varphi}).$
The object to scan is placed outside of this circle. In this two-dimensional setup, a plate collimator in the source restricts emitted photons to the plane $(x,y)$. Thus, cross-sections of the object are scanned. In contrast with other two-dimensional CST designs \cite{nguyen10,tarpau19trpms,rigaud_rotation_free2017,webber2020compton}, no collimator is required at the detector. This notable feature enables an increase of the amount of acquired data for a given position of the detector and thus a possible reduction in the acquisition time.
In this setting, for an angular position $\varphi$ of the detector a scattering angle $\omega$ corresponds to two circular arcs. So, modelling of data acquisition with this collimation-free detector leads to a Radon transform on double circular arcs (DCART). 
Besides the uncollimated detector, this modality has other material advantages such as not requiring relative movement between the system and the object. In addition, the usage of a circular detector path enables reduction of the system size: with a linear detector, information corresponding to large scanning circles is recorded far from the source while with the proposed geometry photons are always collected at a fixed distance of it.

\subsection{Objectives and outline of the paper}
Image reconstruction from projections obtained with this modality requires inversion of the Radon transform on double circular arcs. In this paper, we derive the analytic inversion of this integral transform in order to reconstruct cross-sectional images of an object.  
This paper is outlined as follows.  Section II introduces the measurement model of the CST system as well as the corresponding forward DCART. Section III presents the whole procedure for inverting the DCART. Section IV deals with the discrete formulations for the forward and inverse DCART used for simulations. Numerical simulations in section V illustrate the theoretical feasibility of the new system and the performance of the proposed reconstruction algorithm. Section VI opens a discussion about future works ranging from practical and physical considerations to the possibility of an extension in three dimensions. Concluding remarks of section VII summarize and end the paper.  Technical details in demonstrations are presented in appendices A and B.


\section{Measurement model of the proposed CST system}

\begin{figure}[!ht]
    \centering
    \includegraphics[scale=0.78]{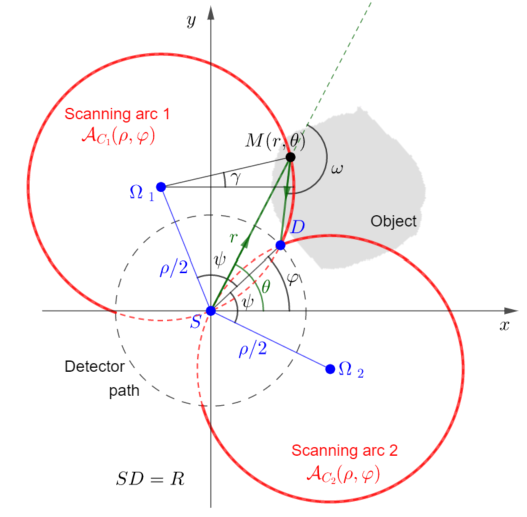}
    \caption{\footnotesize Setup and parametrisation of the new CST modality. Black dotted circle: detector path, Red continuous curves: scanning double arcs, Red dotted curves: portion of circles not used for acquisition}
    \label{fig:paramCST4}
\end{figure}

Acquisitions using this system are made on a family of double  circular arcs $\mathcal{A}_{C_1}$ and $\mathcal{A}_{C_2}$ of centre $\Omega_1$ and $\Omega_2$ whose union is denoted $\mathcal{D}$ in the rest of the paper: $\mathcal{D}(\rho, \varphi) = \mathcal{A}_{C_1}(\rho, \varphi)\cup\mathcal{A}_{C_2}(\rho, \varphi)$. 
This family of double circular arcs is then defined relatively to $\rho$ the diameter of scanning circles and $\varphi$ the angular position of the detector  (see Fig. \ref{fig:paramCST4}). In fact, using $\rho$ or $\omega$ in order to define the family of double circular arcs is equivalent, since they are related by $\rho = R/\sin{\omega}$, where $\omega \in \left[\pi/2, \pi\right[$.
Given $\varphi$ the angular position of $D$ and $\omega$ the considered scattering angle, polar equations of $\mathcal{A}_{C_1}$ and $\mathcal{A}_{C_2}$ are defined as 
\begin{equation}
    \mathcal{A}_{C_m}(\rho, \varphi): r = \rho\, \cos\left(\theta-\left(\varphi-(-1)^{m}\psi\right)\right), \; m\in\{1,2\}
\end{equation}
where $\theta\in[\varphi, \varphi+2\omega-\pi]$ for $\mathcal{A}_{\mathcal{C}_1}$, $\theta\in[\varphi-2\omega+\pi, \varphi]$ for $\mathcal{A}_{\mathcal{C}_2}$ and $\psi = \cos^{-1}{(R/\rho)}$. 

Data measurement using this CST system is modelized by the generalized Radon transform $\mathcal{R}_\mathcal{D}$ on the family of double circular arcs $\mathcal{D}$ whose definition is: 
\begin{definition*}
Let $f$ be an unknown function, non negative, continuous and compactly supported outside the disc of radius $R$ centered at the origin. The Radon transform on double circular arcs $\mathcal{R}_\mathcal{D}$ maps $f$ into the set of its integrals over the family of double circular arcs $\mathcal{D}$ as \begin{equation}
    \mathcal{R}_\mathcal{D}f(\rho, \varphi) = \int_{\mathcal{D}(\rho, \varphi)} f(r,\theta)\, ds,
    \label{eq:formation}
\end{equation}
where $ds$ is the elementary arc length measure on the considered double circular arc (s denotes the curvilinear abscissa).
\end{definition*}

\noindent

In a physical sense, function $f$ is a cross-sectional map of the electronic density of the object. Thus, when the object is scanned as explained in section \ref{sec:A_new_CST}, the flux of photons of energy $E(\omega)$ registered by the detector at site $(R \cos \varphi, R \sin \varphi)$ is proportional to the integral of function $f$ on the pair of circular arcs labelled by $\mathcal{D}(\rho, \varphi)$, i.e. $\mathcal{R}_\mathcal{D}f$. This dependence is weighted by independent factors that model some physical effects. Thus, measurements may be corrected prior to reconstruction by applying the appropriate factors \cite{rigaud2012novel}. For instance, the Klein-Nishina cross-sectional probability depends on $\rho$ (i.e. $\omega$) and accounts for different probabilities of scattering for different angles. 

\section{An analytic inversion formula for the DCART}

In this section, we derive the main equations of the procedure for inverting the DCART. First, we use a circular harmonic expansion to establish a relation between the components of a function $f$ and $\mathcal{R}_\mathcal{D}f$. Then, with a similar approach as Cormack's one, we obtain \eqref{inv_gn}, which is the inversion formula in circular harmonic expansion. Finally, a closed formulation \eqref{eq:recons_f}, which will be used in numerical simulations, is obtained.
\subsection{Circular harmonic expansion}
Functions $f(r,\theta)$ and $\mathcal{R}_\mathcal{D}f(\rho, \varphi)$ are expanded in terms of Fourier series, where $f_n(r)$ and $(\mathcal{R}_\mathcal{D}f)_n(\rho)$ are respectively their circular harmonic expansion components 
\begin{align}
    f(r,\theta) &= \sum_{n=-\infty}^{\infty}\, f_n(r)\;e^{in\theta}\label{eq:decomp_f}\\
    (\mathcal{R}_\mathcal{D}f)(\rho, \varphi) &= \sum_{n=-\infty}^{\infty} \, (\mathcal{R}_\mathcal{D}f)_n(\rho)\;e^{in\varphi}
    \label{eq:decomp_g}
\end{align}

\noindent
where
\begin{align}
    f_n(r) &= \frac{1}{2\pi}\,\int_0^{2\pi} f(r, \theta)\,e^{-in\theta}\, d\theta\label{eq:recomp_f}\\ 
    (\mathcal{R}_\mathcal{D}f)_n(\rho) &= \frac{1}{2\pi}\,\int_0^{2\pi} \,\mathcal{R}_\mathcal{D}f(\rho, \varphi)\,e^{-in\varphi}\,d\varphi
    \label{eq:recomp_g}
\end{align}

\subsection{Inversion formula in circular harmonics expansion}
\begin{proposition}
Function $f$ is completely recovered via its circular harmonic expansion $f_n$ from circular expansion of data measurement $(\mathcal{R}_\mathcal{D}f)_n$ with 
\begin{equation}
        f_n(r) = \frac{1}{\pi}\frac{d}{dr}\int_R^r  \frac{\cosh\left(n\cosh^{-1}{\left(\frac{r}{\rho}\right)}\right)}{\rho\sqrt{\left(\frac{r}{\rho}\right)^2-1}}\frac{(\mathcal{R}_{\mathcal{D}}f)_n(\rho)}{ 2\cos{\left(n\cos^{-1}{\frac{R}{\rho}}\right)}}d\rho
        \label{inv_gn}
\end{equation}
\end{proposition}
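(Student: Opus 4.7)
The plan is Cormack's: use the circular harmonic expansion in \eqref{eq:decomp_f}--\eqref{eq:decomp_g} to diagonalize $\mathcal{R}_\mathcal{D}$ into a countable family of one-dimensional Abel--Chebyshev integral equations indexed by $n$, and invert each of them explicitly. The only real work is an $n$-independent Chebyshev orthogonality identity.

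\emph{Harmonic reduction.} Each arc $\mathcal{A}_{C_m}$ has polar equation $r = \rho\cos(\theta - \varphi_m)$ with $\varphi_m = \varphi - (-1)^m\psi$, so $(dr/d\theta)^2 + r^2 = \rho^2$ gives the arc-length element $ds = \rho\,d\theta$. The identity $\sin\omega = R/\rho$ with $\omega \in [\pi/2, \pi)$ yields $\omega = \pi/2 + \psi$, hence $2\omega - \pi = 2\psi$; both angular ranges, after the shift $\phi = \theta - \varphi_m$, become $\phi \in [-\psi, \psi]$. Inserting the Fourier series for $f$ into \eqref{eq:formation}, computing the coefficient via \eqref{eq:recomp_g}, and observing that (i) the two arcs contribute phases $e^{\pm in\psi}$ that sum to $2\cos(n\psi)$ and (ii) the resulting $\phi$-integrand is even, one obtains
$$(\mathcal{R}_\mathcal{D} f)_n(\rho) = 4\rho\cos(n\psi)\int_0^{\psi} f_n(\rho\cos\phi)\cos(n\phi)\,d\phi.$$
Dividing by $2\cos(n\psi)$, substituting $u = \rho\cos\phi$ and using $\cos(n\phi) = T_n(u/\rho)$ (Chebyshev polynomial of the first kind) together with the support hypothesis $f_n|_{[0,R)} \equiv 0$, one reaches the Volterra equation
$$\widetilde g_n(\rho) \coleq \frac{(\mathcal{R}_\mathcal{D} f)_n(\rho)}{2\cos(n\psi)} = 2\int_R^{\rho} \frac{T_n(u/\rho)}{\sqrt{1 - (u/\rho)^2}}\,f_n(u)\,du,$$
formally identical to the Abel--Chebyshev equation Cormack inverted for circles through the origin.

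\emph{Cormack inversion.} Apply to $\widetilde g_n$ the operator suggested by \eqref{inv_gn}: multiply by $T_n(r/\rho)/\bigl(\rho\sqrt{(r/\rho)^2 - 1}\bigr)$ (where $T_n(r/\rho) = \cosh(n\cosh^{-1}(r/\rho))$ is valid for $r \ge \rho$), integrate $\rho$ over $[R,r]$, and differentiate in $r$. Substituting the Volterra expression for $\widetilde g_n$ and exchanging the order of integration on the triangle $R \le u \le \rho \le r$, the inner $\rho$-integral reduces to the Chebyshev orthogonality identity
$$\int_u^{r} \frac{T_n(r/\rho)\,T_n(u/\rho)}{\rho\sqrt{(r/\rho)^2 - 1}\sqrt{1 - (u/\rho)^2}}\,d\rho = \frac{\pi}{2}, \qquad R \le u < r,$$
valid for every integer $n$. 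The double integral then collapses to $\pi\int_R^r f_n(u)\,du$, whose $r$-derivative is $f_n(r)$, completing the proof.

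\emph{Main obstacle.} The work is concentrated in the $n$-independent identity above. A proof uses the joint substitution $\rho = r/\cosh\xi = u/\cos\eta$ (which imposes $r\cos\eta = u\cosh\xi$): the kernel becomes $\cos(n\eta)\cosh(n\xi)$ against a Jacobian that reduces to a constant, and the product-to-sum formula $\cos(n\eta)\cosh(n\xi) = \tfrac12[\cos(n(\eta - i\xi)) + \cos(n(\eta + i\xi))]$ combined with a contour deformation in $\C$ evaluates the remaining integral to $\pi/2$. All other ingredients --- Fubini on the triangular region, interchange of sum and integral, and the $2\pi$-orthogonality of $\{e^{ik\varphi}\}$ --- are routine once the support and continuity of $f$ are used to justify them. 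A secondary concern is the zero set of $\cos(n\psi)$ as a function of $\rho$, which corresponds to consistency conditions for the data in Cormack's sense; it does not affect the formal derivation but must be addressed when passing to the Hilbert-transform reformulation used for numerics.
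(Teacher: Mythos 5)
Your proposal is correct and follows essentially the same route as the paper: reduce to the Abel--Chebyshev Volterra equation $(\mathcal{R}_\mathcal{D}f)_n(\rho) = 4\rho\cos(n\psi)\int_0^{\psi}f_n(\rho\cos\phi)\cos(n\phi)\,d\phi$ via the circular harmonic expansion and the symmetry of the two arcs, then apply Cormack's inversion operator and the $\pi/2$ Chebyshev orthogonality identity (which the paper simply cites from Cormack, whereas you sketch its proof). The only point both treatments leave open is the division by $\cos(n\psi)$ at its zeros, which you correctly flag as a consistency/regularization issue rather than a gap in the formal derivation.
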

\begin{proof}
Data projection on double circular arcs $\mathcal{R}_\mathcal{D}f(\rho, \varphi)$ can be decomposed on two Radon transforms on the families of circular arcs $\mathcal{A}_{\mathcal{C}_1}(\rho, \varphi)$ and $\mathcal{A}_{\mathcal{C}_2}(\rho, \varphi)$. Consequently, we denote $\mathcal{R}_{\mathcal{A}_{\mathcal{C}_1}}(\rho, \varphi)$ and $\mathcal{R}_{\mathcal{A}_{\mathcal{C}_2}}(\rho, \varphi)$ the Radon transforms on respectively $\mathcal{A}_{\mathcal{C}_1}(\rho, \varphi)$ and $\mathcal{A}_{\mathcal{C}_2}(\rho, \varphi)$, whose sum gives $\mathcal{R}_\mathcal{D}f$. On a similar way, $\mathcal{R}_{\mathcal{A}_{\mathcal{C}_1}}(\rho, \varphi)$ and $\mathcal{R}_{\mathcal{A}_{\mathcal{C}_2}}(\rho, \varphi)$ can be decomposed in Fourier series to obtain respectively  $(\mathcal{R}_{\mathcal{A}_{\mathcal{C}_1}}f)_n$ and $(\mathcal{R}_{\mathcal{A}_{\mathcal{C}_2}}f)_n$. By linearity, we have $(\mathcal{R}_\mathcal{D}f)_n(\rho) = (\mathcal{R}_{\mathcal{A}_{\mathcal{C}_1}}f)_n(\rho)  + (\mathcal{R}_{\mathcal{A}_{\mathcal{C}_2}}f)_n(\rho).$ First step of this demonstration is to make explicit the circular expansions of $\mathcal{R}_{\mathcal{A}_{\mathcal{C}_m}}f$, $m\in\{1,2\}$. 
An interesting property of $\mathcal{A}_{\mathcal{C}_1}$ and $\mathcal{A}_{\mathcal{C}_2}$ is their symmetry about $(\varphi+\psi)$ and $(\varphi-\psi)$ respectively (see Fig. \ref{fig:paramCST4}). This feature allows us to see each ${\mathcal{A}_{\mathcal{C}_{m}}}$ as a combination of two equal and symmetric elements of arcs. One of these elements of arc can be rewritten as an angular shift of the other one, hence the following relation for $\mathcal{R}_{\mathcal{A}_{\mathcal{C}_{m}}}f$
\begin{equation}
    \mathcal{R}_{\mathcal{A}_{\mathcal{C}_{m}}}f(\rho, \varphi) =\\ \sum_{n=-\infty}^{\infty}\int_{\mathcal{A}_{\mathcal{C}_{m}}^+(\rho,\varphi)}\, f_n(r)\left(e^{in\theta}+e^{in\left[2(\varphi-(-1)^{m}\psi)-\theta\right]}\right)ds_{m} \label{eq:sym}
\end{equation}
where $\mathcal{A}_{\mathcal{C}_{m}}^+$ denotes the half part of circular arc  $\mathcal{A}_{\mathcal{C}_{m}}$, \\$\theta \geq \varphi -(-1)^{m} \psi$ and $ds_{m}$ is the elementary arc length measure on $\mathcal{A}_{\mathcal{C}_{m}}$. 

Observing that 
\begin{equation}
    e^{in\theta}+e^{in\left[2+(\varphi-(-1)^{m}\psi)-\theta\right]}=\\2\,e^{in\varphi}\,e^{in\psi}\cos\left[n\left(\theta-\left(\varphi-(-1)^{m}\psi\right)\right)\right],
    \label{eq:aide_identification}
\end{equation}

\noindent and plugging \eqref{eq:aide_identification} into \eqref{eq:sym}, one obtains the circular harmonic expansions of $\mathcal{R}_{\mathcal{A}_{\mathcal{C}_1}}f$ and $\mathcal{R}_{\mathcal{A}_{\mathcal{C}_2}}f$
\begin{equation}
     \frac{(\mathcal{R}_{\mathcal{A}_{\mathcal{C}_{m}}}f)_n(\rho)}{2\,e^{-(-1)^{m} in\psi}} = \\ \int_{\mathcal{A}_{C_{m}}^+(\rho, \varphi)} f_n(r)\cos\left[n\left(\theta-\left(\varphi-(-1)^{m}\psi\right)\right)\right]ds_{m}.
    \label{eq:g2n}
\end{equation}
   
Straightforward computations show that 
$$\theta-\varphi-(-1)^{m} \psi = \cos^{-1}{\left(r/\rho\right)} \hbox{ and } ds_{m} = \left(1-\left(r/\rho\right)^2\right)^{-1/2}dr.$$

Equation \eqref{eq:g2n} becomes
\begin{equation}
    \frac{(\mathcal{R}_{\mathcal{A}_{\mathcal{C}_{m}}}f)_n(\rho)}{2\,e^{-(-1)^{m} i n\psi}}= \int_{R}^{\rho}f_n(r)\,
    \frac{\cos\left(n\cos^{-1} (r/\rho)\right)}{\sqrt{1-\left(r/\rho\right)^2}}dr.
    \label{eq:g_1n_g_2n}
\end{equation}


 Hence, from the addition of the expressions in $(\mathcal{R}_{\mathcal{A}_{\mathcal{C}_1}}f)_n$ and $(\mathcal{R}_{\mathcal{A}_{\mathcal{C}_2}}f)_n$ in \eqref{eq:g_1n_g_2n}, the connection between circular components of $f$ and $\mathcal{R}_\mathcal{D}f$ can be written
\begin{equation}
        \frac{(\mathcal{R}_\mathcal{D}f)_n(\rho)}{4\cos{\left(n\psi\right)}}= \int_{R}^{\rho} f_n(r)\,
    \frac{\cos\left(n\cos^{-1}(r/\rho)\right)}{\sqrt{1-\left(r/\rho\right)^2}}dr.
    \label{eq:g_n}
\end{equation}
  

Then, denoting 
\begin{equation}
    G_n(\rho) =  \frac{(\mathcal{R}_{\mathcal{D}}f)_{n}(\rho)}{2 \cos{(n\psi)}}\label{eq:Gn}
\end{equation}
\noindent and multipliying both sides of (\ref{eq:g_n})  by
$$ \int_R^t \frac{\cosh\left(n\cosh^{-1}{\left(t/\rho\right)}\right)}{\rho\sqrt{\left(t/\rho\right)^2-1}}d\rho,$$
\noindent with $t\in \mathbb{R}, t>R$, one gets

\begin{equation}
     \frac{1}{2}\int_R^t \frac{\cosh\left(n\cosh^{-1}{\left(t/\rho\right)}\right)}{\rho\sqrt{\left(t/\rho\right)^2-1}}G_{n}(\rho)d\rho = \int_R^t f_n(r)\int_{r}^{t} \frac{\cosh\left(n\cosh^{-1}{\left(t/\rho\right)}\right)}{\rho\sqrt{\left(t/\rho\right)^2-1}} \frac{\cos\left(n\cos^{-1}{(r/\rho)}\right)}{\sqrt{1-\left(r/\rho\right)^2}}d\rho dr,
     \label{eq:rearranging}
\end{equation}
where the right $\rho$-integral is $\pi/2$ \cite{cormack81}. Then, differentiating with respect to the variable $t$, one gets
\begin{equation}
    f_n(t) = \frac{1}{\pi} \frac{d}{dt}\int_R^t  \frac{\cosh\left(n\cosh^{-1}{\left(t/\rho\right)}\right)}{\rho\sqrt{\left(t/\rho\right)^2-1}}\,G_{n}(\rho)d\rho
\end{equation}

Going back to coefficients $(\mathcal{R}_\mathcal{D}f)_{n}$ and substituting  $t$ by $r$, one finds (\ref{inv_gn}). 
\end{proof}

\begin{remark}
Equation \eqref{inv_gn} demonstrates explicitly the Cormack's hole theorem: in order to determine $f(r,\theta)$ by its circular harmonic expansion $f_n(r)$, the knowledge of the coefficients $(\mathcal{R}_{\mathcal{D}}f)_n(\rho)$ in the annular domain $R<\rho<r$ is sufficient. 
\end{remark}

\subsection{A closed formulation of \eqref{inv_gn}}
\begin{proposition}
Denoting $G(\rho, \varphi)$ the function corresponding to the Fourier series expansion $\displaystyle \sum_{n=-\infty}^{\infty}\, G_n(\rho)\;e^{in\varphi}$ with $G_n$ as defined in \eqref{eq:Gn}, $f$ can be completely recovered from $G$ as follows
\begin{equation}
    f(r,\theta) =\frac{1}{2\pi^2 r} \int_0^{2\pi} \text{p.v.}\left\{\int_R^\infty  \frac{\partial G(\rho, \varphi)}{\partial \rho} \frac{\rho}{r-\rho\cos{(\theta-\varphi)}}d\rho\right\}d\varphi
    \label{eq:recons_f}
\end{equation}
\noindent where p.v. denotes the Cauchy principal value.
\end{proposition}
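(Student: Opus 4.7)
The plan is to sum the Fourier series $f(r,\theta)=\sum_{n}f_n(r)\,e^{in\theta}$ using the circular-harmonic inversion (\ref{inv_gn}) and to recognize the resulting kernel in closed form as a Cauchy principal value in $\rho$. A first cleanup uses $\cosh(n\cosh^{-1}x)=T_n(x)$ (Chebyshev polynomial of the first kind, for $x\geq 1$) together with $\rho\sqrt{(r/\rho)^2-1}=\sqrt{r^2-\rho^2}$ to rewrite (\ref{inv_gn}) as
\begin{equation*}
f_n(r)=\frac{1}{\pi}\frac{d}{dr}\int_R^r\frac{T_n(r/\rho)}{\sqrt{r^2-\rho^2}}\,G_n(\rho)\,d\rho,
\end{equation*}
with $G_n$ as in (\ref{eq:Gn}).

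Next I would move the $r$-derivative inside the integral. The cleanest route is the substitution $\rho=r/\cosh u$, which reduces the integrand to $\cosh(nu)\,G_n(r/\cosh u)/\cosh u$ and isolates the $r$-dependence in the upper limit $u_R(r)=\cosh^{-1}(r/R)$ and in the argument of $G_n$. Since $f$ is compactly supported outside the disk of radius $R$, the Fourier coefficient $G_n(R)$ vanishes, so the Leibniz boundary term drops. Returning to the variable $\rho$ yields an equivalent Chebyshev representation in which the derivative is carried by the data, with an extra factor of $\rho/r$ in front of the same kernel.

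Substituting this into $f(r,\theta)=\sum_n f_n(r)e^{in\theta}$, replacing $G_n'(\rho)$ by the $\varphi$-average $\tfrac{1}{2\pi}\int_0^{2\pi}\partial_\rho G(\rho,\varphi)e^{-in\varphi}d\varphi$, and formally interchanging the summation with the integrations in $\rho$ and $\varphi$, the problem reduces to the distributional identity
\begin{equation*}
\sum_{n\in\mathbb{Z}}\frac{T_n(r/\rho)}{\sqrt{r^2-\rho^2}}\,e^{in\alpha}\;\sim\;\text{p.v.}\,\frac{1}{r-\rho\cos\alpha}.
\end{equation*}
I would verify this by computing the Fourier coefficients of the right-hand side on $[0,2\pi)$ via contour integration along the unit circle $z=e^{i\alpha}$. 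For $\rho<r$ the two poles of the integrand lie strictly off the circle and the residue theorem produces the bounded coefficients; for $\rho>r$ the poles collide onto the contour, so the principal-value prescription contributes the hyperbolic Chebyshev piece needed to match $T_n(r/\rho)=\cosh(n\cosh^{-1}(r/\rho))$, while naturally extending the $\rho$-integration to $[R,\infty)$. Combining the prefactor $1/(\pi r)$ coming from the $f_n$ representation with the $1/(2\pi)$ from the $\varphi$-averaging yields the announced $1/(2\pi^2 r)$ in (\ref{eq:recons_f}).

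The main obstacle is the rigorous justification of the distributional identity above and of the interchange of sum, derivative, and integrals, since the Chebyshev series is only conditionally (distributionally) convergent in $\alpha$ for $\rho<r$. The support hypothesis on $f$ is used twice: to kill the boundary term at $\rho=R$ in the integration by parts, and to ensure that $\partial_\rho G$ decays well enough in $\rho$ for the p.v.\ extension of the $\rho$-integral to $[R,\infty)$ to be meaningful.
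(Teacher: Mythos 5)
Your opening moves are sound: rewriting $\cosh(n\cosh^{-1}(r/\rho))$ as $T_{|n|}(r/\rho)$, and transferring the $r$-derivative onto the data via $\rho=r/\cosh u$ so that the factor $\rho/r$ and $\partial_\rho G$ appear, are both consistent with the target formula, and the boundary term at $\rho=R$ does vanish because the support hypothesis forces $G_n(R)=0$. The gap is in the summation step: the distributional identity
\begin{equation*}
\sum_{n\in\mathbb{Z}}\frac{T_{|n|}(r/\rho)}{\sqrt{r^2-\rho^2}}\,e^{in\alpha}\;\sim\;\text{p.v.}\,\frac{1}{r-\rho\cos\alpha}
\end{equation*}
is false. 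For $R<\rho<r$ the kernel $1/(r-\rho\cos\alpha)$ is smooth (no principal value is needed there) and the contour integration you propose gives its Fourier coefficients exactly as $\bigl((r/\rho)-\sqrt{(r/\rho)^2-1}\bigr)^{|n|}/\sqrt{r^2-\rho^2}$, i.e.\ only the decaying geometric part of $T_{|n|}$. These differ from $T_{|n|}(r/\rho)/\sqrt{r^2-\rho^2}$ by exactly $U_{|n|-1}(r/\rho)/\rho$ (relation \eqref{eq:relation_tcheby}), and since $T_{|n|}(x)\sim e^{|n|\cosh^{-1}x}/2$ for $x>1$, your left-hand side has exponentially unbounded Fourier coefficients and is not a distribution in $\alpha$ at all. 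For $\rho>r$ the identity does not even typecheck: $r/\rho<1$, so $\cosh^{-1}(r/\rho)$ is not real and $\sqrt{r^2-\rho^2}$ is imaginary.

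What is missing is precisely the mechanism that disposes of the divergent $U_{|n|-1}$ part, namely the Cormack consistency conditions, which occupy Appendix A of the paper. From the forward relation \eqref{eq:g_n} one shows $\int_R^\infty G_n(\rho)\,\rho^{-k}\,d\rho=0$ for $k=n,n-2,\dots>0$, hence $\int_R^\infty U_{|n|-1}(r/\rho)\,G_n(\rho)\,d\rho=0$; this lets one replace the unstable piece $\int_R^r U_{|n|-1}G_n\,d\rho/\rho$ by $-\int_r^\infty U_{|n|-1}G_n\,d\rho/\rho$. On $[r,\infty)$ the $U$-series does sum distributionally to the principal-value kernel (Lemma 2), while on $[R,r]$ only the bounded geometric series survives (Lemma 1). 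It is this flip of the $U$-integral --- not the p.v.\ prescription --- that extends the integration range to $[R,\infty)$. Without establishing these moment conditions on $G_n$, the extension you describe as ``natural'' has no justification, and the term-by-term sum you propose diverges.
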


\begin{proof}
This result is achieved introducing consistency conditions \cite{cormack84, truong2011radon} in terms of Cormack sense. The complete demonstration is proposed in the Appendices. 
\end{proof}

\begin{remark}
Equation \eqref{eq:recons_f} can be rewritten using the Hilbert transform and implemented in a more efficient way using standard tools of discrete Fourier analysis, as we are going to show in the next section.
\end{remark} 


\section{Numerical inversion }
\subsection{Numerical formulation of the forward DCART}
A parametrisation in Cartesian coordinates (instead of equations in polar coordinates) is preferable to perform numerical simulations in order to have the same distance between adjacent running points on the considered scanning circle. Hence, a scanning arc $\mathcal{A}_{C_m}$, $m\in\{1,2\}$ can be seen as the shift of a circle centred at the origin of identical radius, to its center $\Omega_m(x_{\Omega_m},y_{\Omega_m})$ with a restriction of the domain of the variable $\gamma$ (see Fig. \ref{fig:paramCST4}) : 
\begin{equation}
    \mathcal{A}_{C_m}(\rho, \varphi): (x_m(\gamma), y_m(\gamma)) = (x_{\Omega_m}(\rho, \varphi),y_{\Omega_m}(\rho, \varphi)) + \frac{\rho}{2}(\cos{\gamma}, \sin{\gamma}), \gamma\in[\gamma_{m_{min}}, \gamma_{m_{max}}]
\end{equation}

Cartesian parametrisation of  $\mathcal{A}_{C_m}$ is 
\begin{equation}
    \mathcal{A}_{C_{m}}(\rho, \varphi): (x_{m}(\gamma),y_{m}(\gamma)) = \frac{\rho}{2}\left(\cos{\left(\varphi-(-1)^m\psi\right)}+\cos{\gamma}, \sin{\left(\varphi-(-1)^m\psi\right)}+\sin{\gamma}\right)
\end{equation}

\noindent where $ \gamma \in \left[\varphi-\psi, \varphi+3\psi\right]$ for $\mathcal{A}_{C_1}$ and $ \gamma \in \left[\varphi-3\psi, \varphi+\psi\right]$ for $\mathcal{A}_{C_2}$.
Then, the numerical computation of the forward DCART requires a discrete version of it. This process is achieved with a linear interpolation to make coincide the position of the object with the Cartesian parametrisation of the double family of circular arcs and an approximation of the integral with a sum according to the equation
\begin{equation}
    \mathcal{R}_{\mathcal{D}}f(\rho_i, \varphi_j) = \frac{\rho_i}{2}\Delta_\gamma \sum_{\substack{\gamma_k\in[\gamma_{1_{min}},\gamma_{1_{max}}]\\\\cup[\gamma_{2_{min}},\gamma_{2_{max}}]}}f\left(x(\gamma_k), y(\gamma_k)\right)
    \label{eq:model_algo}
\end{equation}
where $\Delta_\gamma$ is the sampling angular distance of $\gamma$, $\rho_i$ and $\varphi_j$ are the discrete versions of $\rho$ and $\varphi$ respectively: $\rho_i = i(\rho_{max}-R)/N_\rho$, $i = 1, ..., N_\rho$ and $\varphi_j = j\cdot(2\pi/N_\varphi)$, $j=1, ..., N_\varphi$. Hence, $\mathcal{R}_{\mathcal{D}}f(\rho_i, \varphi_j)$ is a $N_\rho\times N_\varphi$ matrix. $\rho_{max}$ refers to the largest diameter of the scanning double circular arcs. Fig. \ref{fig:ex_acq_shepp} shows an example of data measurement for the Shepp-Logan phantom. 
\begin{figure}
    \centering
    \includegraphics[scale = 0.15]{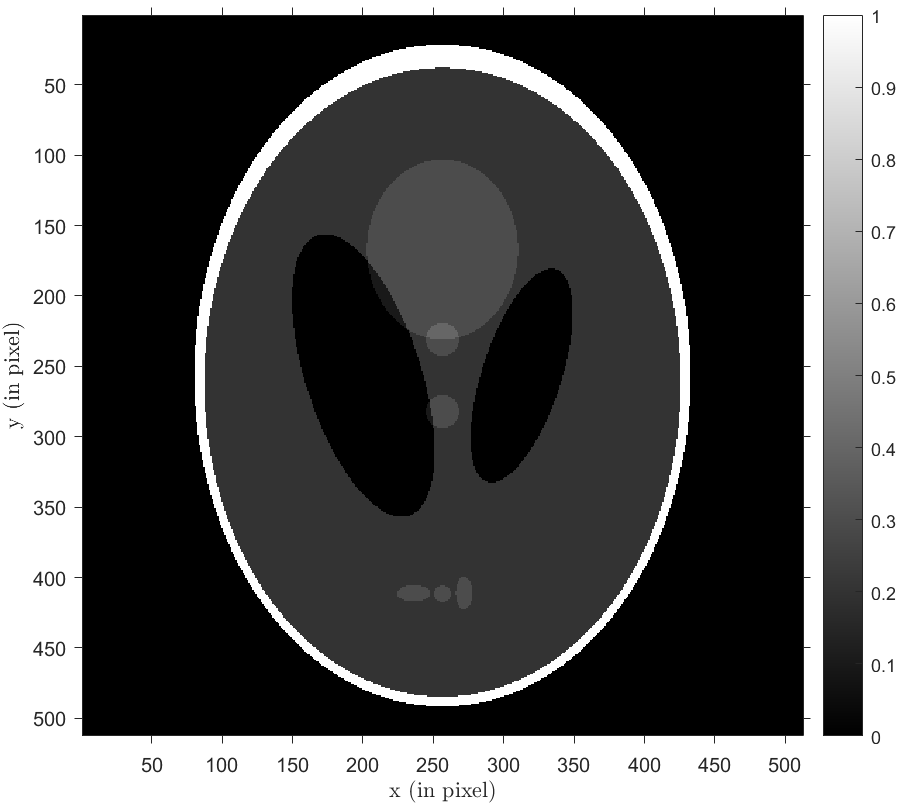} \hspace{2cm}
    \includegraphics[scale = 0.15]{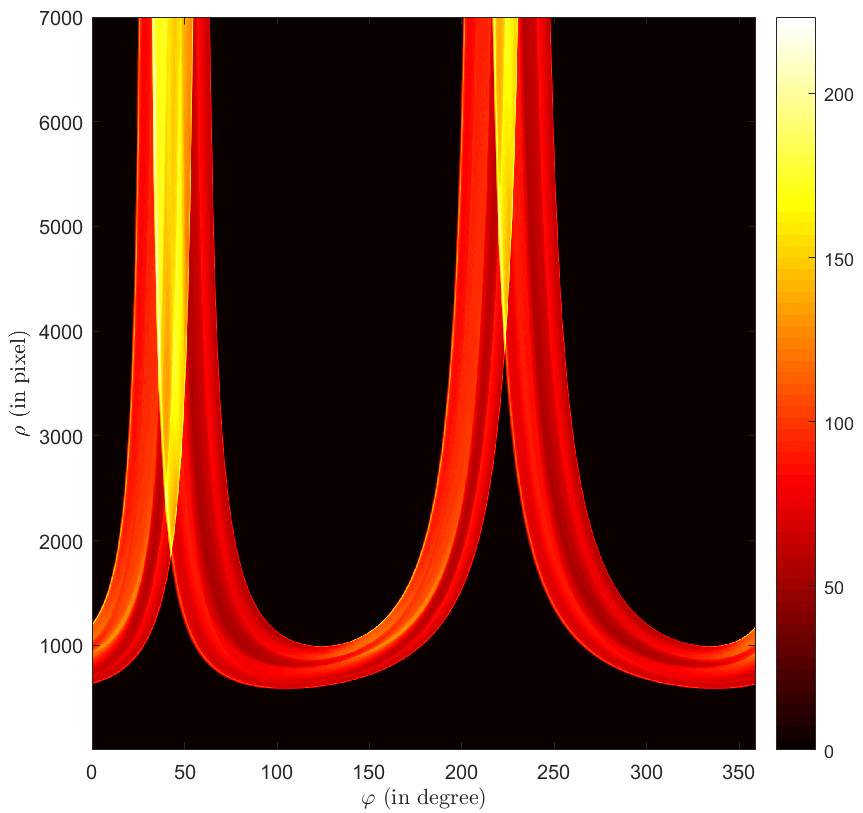}
    \caption{\footnotesize Left: Object. Right: corresponding DCART for $R=256$, $\rho_{max} = 7000$, $N_\rho = 6744$ and $N_\varphi=1609$.}
  \label{fig:ex_acq_shepp}
\end{figure}
\subsection{Reconstruction strategy}
For image reconstruction, we use the Hilbert transform, related to the Cauchy principal value as
\begin{equation}
     \mathcal{H}\{u\}(t) = \frac{1}{\pi} \text{ p.v.}\left\{\int_{-\infty}^{\infty}\frac{u(\tau)}{t-\tau}d\tau\right\}.
\end{equation} 

The Hilbert transform can be computed in the Fourier domain, where we have :
\begin{equation}
        \mathcal{H}\left\{u\right\}(t)= \mathcal{F}^{-1}(-i\cdot\hbox{sign}(\nu)\cdot\mathcal{F}(u)(\nu))(t),
        \label{eq:hilbert_fourier}
    \end{equation}
    where $\mathcal{F}$ denotes the one-dimensional Fourier transform. 
    
Equation (\ref{eq:recons_f}) of image reconstruction becomes consequently
\begin{equation}
    f(r,\theta) = \frac{1}{2\pi r}\cdot\int_0^{2\pi} \frac{1}{\cos(\theta-\varphi)} \mathcal{H}\left\{\frac{\partial G(\rho, \varphi)}{\partial\rho}\cdot\rho\right\}\left(\frac{r}{\cos{(\theta-\varphi)}}\right)d\varphi.
    \label{eq:recons2}
\end{equation}


Finally, using the correspondence between polar coordinates $(r,\theta)$ and Cartesian coordinates $(x,y)$ and  $r \cos{(\theta-\varphi)}= x\cos{\varphi}+y\sin{\varphi}$, image reconstruction equation used for simulation is 
\begin{equation}
    f(x,y) = \frac{1}{2\pi}\int_0^{2\pi} \frac{1}{x\cos{\varphi}+y\sin{\varphi}}\cdot\mathcal{F}^{-1}\left(-i\cdot\text{sign}(\nu)\mathcal{F}\left(\frac{\partial G(\rho, \varphi)}{\partial\rho}\rho\right)(\nu)\right)\left(\frac{x^2+y^2}{x\cos{\varphi}+y\sin{\varphi}}\right)d\varphi.
    \label{eq:recons_cart_final}
\end{equation}

The projections $G(\rho, \varphi)$ are computed via the circular harmonic components of $\mathcal{R}_{\mathcal{D}}f(\rho, \varphi)$ with (\ref{eq:Gn}). However, zeros in the denominator may be source of instability and regularization may be required. According to \cite{moon2017analytic}, we add a regularization parameter $\epsilon$ in \eqref{eq:Gn} (equal to $1$ in the proposed simulation) in order to compute the circular harmonic components of $G$: 
\begin{equation}
    G_n(\rho) = \frac{\cos{(n\psi)}}{\epsilon^2+\cos{(n\psi)}^2}\, \frac{(\mathcal{R}_\mathcal{D}f)_n(\rho)}{2}. 
    \label{eq:composeGn}
\end{equation}

Algorithm \ref{algo:reconssss} summarizes the different steps for reconstructing the object from \eqref{eq:recons_cart_final}.

\IncMargin{1em}
    \begin{algorithm}[!ht]
    \footnotesize
        \SetAlgoLined
        \KwData{$\mathcal{R}_\mathcal{D}f(\rho, \varphi)$, projections on double circular arcs of function $f$}
        \KwResult{$f(x,y)$}
        Compute circular harmonic expansion of $\mathcal{R}_\mathcal{D}f(\rho, \varphi)$ to compute $G_n(\rho)$ with \eqref{eq:composeGn} and recompose $G(\rho, \varphi)$\;
        Compute discrete derivation of $G(\rho, \varphi)$ relative to variable $\rho$ and multiply the result by $\rho$\;
        Write the Hilbert transform as a filtering operation in Fourier domain using \eqref{eq:hilbert_fourier}\; 
        For each $\varphi$, interpolate the data on the considered scanning circles $(x^2+y^2)/(x\cos\varphi+y\sin\varphi)$ of \eqref{eq:recons_cart_final}\;
        Weight the result using the factor $1/(x\cos\varphi+y\sin\varphi)$\;
        Sum the weighted interpolations on all directions $\varphi$\;
        Weight the result by $\frac{1}{2\pi}$\;
    \caption{\footnotesize Reconstruction of object}
    \label{algo:reconssss}
    \end{algorithm}
    \DecMargin{1em}

\section{Experiments and study of the performance of the reconstruction algorithm}
\subsection{General parameter choices}
We will perform simulations on three different phantoms, the Shepp-Logan, the Derenzo (also called Jaszczak) and bars phantoms. These phantoms will allow us to evaluate different criteria such as spatial resolution, contrast and the ability of the proposed algorithm to reconstruct singularities tangent to lines with arbitrary slopes. 
In all proposed simulations, the size of the object is $512\times512$ pixels. The detector is moving on a ring of radius $R=256$ pixels with a constant step of arc length between two adjacent positions. This represents an amount of $N_\varphi=2\pi R = 1609$ different positions for the detector to collect data. We know the parameter choices should satisfy the condition $N_\rho\times N_\varphi \geq N\times N$ according to \cite{RB1990}, where $N_\rho$ is the number of double circular arcs per detector position. To quantitatively asses the quality of reconstructions, we use NMSE $= ||f-f_0||_2^2/N^2$ and NMAE  $= ||f-f_0||_1/N^2$ metrics, where $f_0$ and $f$ are the respective original and reconstructed objects and $||.||_1$ and $||.||_2$ refer respectively to the $1$ and $2$-norm. These results are summed up in Table \ref{tab:NMSE_NMAE}.  
\subsection{Study of the influence of some general parameters on reconstructions}
\subsubsection{Choice of $\rho_{max}$} First step consists in choosing the maximum diameter for scanning circular arcs $\rho_{max}$. This is equivalent to choosing the maximal scattering angle $\omega_{max}$ since $\omega_{max}=\pi-\arcsin{(R/\rho_{max})}$. We propose in Fig. \ref{fig:varying_rho_max} reconstructions of Derenzo and the bar phantoms for $\rho_{max} = 3000, \,5000$ and $7000$ with a discretization step $\Delta \rho =1$ length unit. These choices correspond respectively to a maximal scattering angle $\omega_{max}= 175, \,177$ and $178$ degrees. A higher $\rho_{max}$ allows to reconstruct the upper right and lower left slopes of straight lines tangent to the ellipses of Shepp-Logan and to the circles of Derenzo and also, small structures of Shepp-Logan. However, one can observe a visual loss of contrast when $\rho_{max}$ increases and this may explain higher NMSE and NMAE values (see Table 1).   

\subsubsection{Choice of $N_\rho$} For the rest of the simulations, $\rho_{max}$ is fixed and equal to $5000$, which seems to be, a good trade off between contrast and good reconstruction of small structures. The objective is now to evaluate the ratio of required data $N_\rho\times N_\varphi$ relative to the number of pixels to reconstruct $N^2$. We denote $Q$ this ratio : $Q = (N_\rho\times N_\varphi)/N^2$. $Q=1$ represents the minimal case where $N_\rho = N_{\rho,min} = N^2/N_\varphi = 163$. This means that, for $Q=1$, $163$ values for $\rho$ are chosen uniformely between $R$ and $\rho_{max}$. Then, the other $N_\rho$ are the product of the chosen ratio $Q$ and $N_{\rho,min}$.  Figure \ref{fig:varying_n_rho} shows the obtained reconstructions for $Q = 1, 5$ and $10$. Although the reconstructions appears to be blurred for $Q = 1$, the algorithm already gives a first estimation of the objects to be reconstructed. 
The \textit{optimal} choice of $Q$ seems to be specific to each object and not directly linked to the proposed algorithm (see Table 1). 

\begin{figure}
    \centering
    \includegraphics{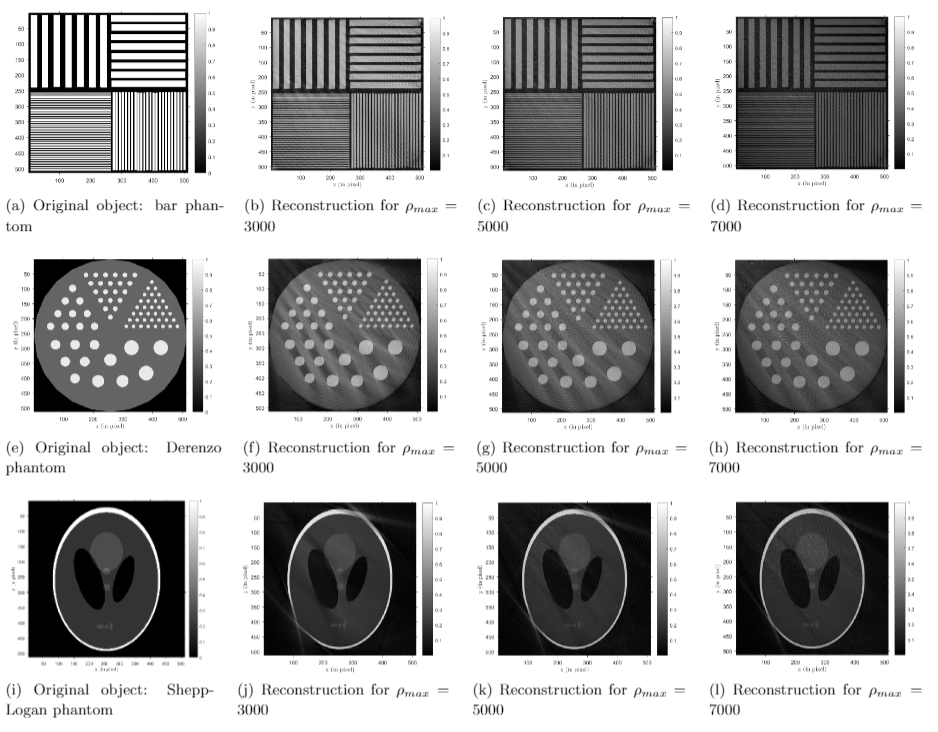}
    \caption{\footnotesize Reconstruction results of the bar (Fig. 3a), Derenzo (Fig. 3b) and Shepp-Logan phantoms (Fig. 3c) for $Q=1$ (a,d,g), $Q=5$ (b,e,h), $Q=10$ (c,f,i)}
    \label{fig:varying_rho_max}
\end{figure}

\begin{figure}[!ht]
    \centering
    \includegraphics{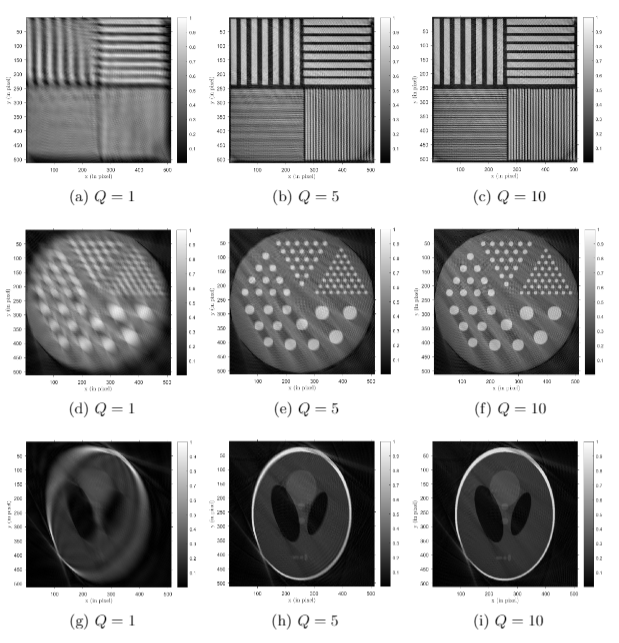}
    \caption{\footnotesize Reconstruction results of the bar (Fig. 3a), Derenzo (Fig. 3b) and Shepp-Logan phantoms (Fig. 3c) for $Q=1$ (a,d,g), $Q=5$ (b,e,h), $Q=10$ (c,f,i)}
    \label{fig:varying_n_rho}
\end{figure}

\subsubsection{Robustness against noise} $Q$ is now fixed and equal to $10$. 
We carry out simulations adding to projections a Gaussian noise of signal-to-noise ratios SNR$= 10, 15$ and $20$ dB. 
 Figure \ref{fig:varying_bruit} shows the obtained results for the three phantoms that exhibit grain artifacts for the higher levels of noise.  

\begin{figure}[!ht]
    \centering
    \includegraphics{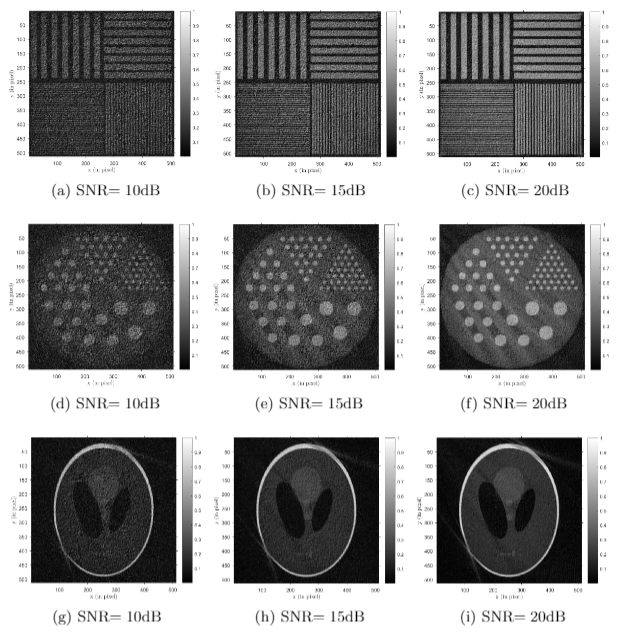}
    \caption{\footnotesize Reconstruction results of the bar (Fig. 3a), Derenzo (Fig. 3b) and Shepp-Logan phantoms (Fig. 3c) with the addition of a Gaussian noise (SNR=$10$dB (a,d,g), SNR=$15$dB (b,e,h), SNR=$20$dB (c,f,i)}
    \label{fig:varying_bruit}
\end{figure}



\begin{table*}
\footnotesize
 \caption{NMSE / NMAE of reconstruction results}
\label{tab:NMSE_NMAE}
\begin{tabularx}{\textwidth}{@{}l|*{3}{C}|*{3}{C}|*{3}{C}}
\toprule
phantoms     & \multicolumn{3}{c}{$\rho_{max}$ (Fig. \ref{fig:varying_rho_max})} & \multicolumn{3}{c}{$Q$ (Fig. \ref{fig:varying_n_rho})} & \multicolumn{3}{c}{SNR (Fig. \ref{fig:varying_bruit})} \\ 
     & $3000$ & $5000$ & $7000$ & $1$ & $5$ & $10$ & $10$dB & $15$dB & $20$dB \\ 
\midrule
Bar    & $0.1073 /$ $0.2866$      & $0.1736/$  $0.3602$   & $0.2769/$  $0.4362$   & $0.2300/$  $0.4504$  & $0.1637/$  $0.3598$   &  $0.1320/$  $0.3190$   & $0.3418/$  $0.4861$     & $0.2601/$  $0.4294$   & $0.1942/$  $0.3783$  \\ 
Derenzo  & $0.0163/$  $0.1019$        & $0.0223/$  $0.1179$        & $0.0319/$  $0.1399$    & $0.0257/$  $0.1258$   & $0.0161/$  $0.0973$    & $0.0159/$  $0.0987$   & $0.0761/$  $0.2235$    & $0.0490/$  $0.1760$   & $0.0287/$  $0.1323$  \\ 
Shepp-Logan & $0.0098/$  $0.0573$  & $0.0098/$  $0.0618$    & $0.0110/$  $0.0652$ & $0.0240/$  $0.0728$ & $0.0121/$  $0.0575$  & $0.0095/$  $0.0550$  & $0.0198/$  $0.0957$   & $0.0140/$  $0.0763$   & $0.0109/$  $0.0621$   \\   
\bottomrule
\end{tabularx}
\end{table*}

\section{Additional considerations and perspectives}

Interesting issues arise when studying the new Compton scattering modality. We briefly detail some of them.

\subsection{Practical considerations}

\subsubsection{Dominance of the Compton effect}
    Compton effect is dominant in matters with low atomic numbers in a wide range of middle-low energies. Particularly, these scenarios include biological tissues and medical applications.
\subsubsection{Assumption of first order scattering}
    All models of CST based on Radon transforms \cite{nguyen10, webber2018three, webber2019microlocal, cebeiro, truong2019compton, tarpau19trpms, tarpau2020compton, cebeiro18, cebeiro2017new} assume that photons undergo scattering only once before reaching detector, i.e. they consider only first order scattering. The consequences that multiple scattering (second order or higher) may have on image quality are still unclear and how to deal with it is an open question. Further work based on Monte Carlo simulations may help to understand this behaviour. In addition, the lower probability and the random nature of this event suggest that noise reduction strategies are a good starting point.
\subsubsection{Inclusion of attenuation in the model} 
   
    An analytic inversion formula for the non-attenuated Radon transform has been shown to play an important role in the reconstruction strategy for the attenuated case. In a practical setting, attenuation and scattering should be considered in the forward model. This leads to a major challenge from the mathematical point of view for which no exact solution is known at present. A number of ways to deal with the problem of attenuation in emission tomography have been studied like the generalized Chang Correction (GCC) \cite{chang1978method}, an iterative algorithm that allows the \textit{a posteriori} correction of the reconstruction or the Iterative Pre Correction algorithm \cite{maze1992iterative} which corrects data. This later method has been adapted for the previous CST Nguyen and Truong's modality in \cite{guerrero2013attenuation}. This study requires having the analytic inverse Radon transform found in the non-attenuated case, i.e. the equivalent problem that we have efficiently solved here for our new CST modality. More recently, the GCC has been adapted by the authors \cite{tarpau2019ndt} for a special bi-modality able to provide the attenuation map and electron density. Results shown in this paper encourage further work on that direction.
\subsubsection{Realistic source and detector} In a practical scenario, source and detector are not punctual but have a finite dimension. Although no study explore the consequences of it, blur can be expected at reconstruction. Furthermore, non ideal energy resolution at detector may also produce some blur due to the fact that uncertainties in the energy of photons translate to uncertainties in the localization of scattering sites (electrons). Nevertheless, permanent progress in detector design has been achieved in recent years and is expected for future years. Regarding the source, angular dependence can be included in a correction factor inside the model as in \cite{rigaud2012novel}.
\subsection{An extension of this modality in three dimensions}
An extension of this modality to three dimensions can be also envisaged: a fixed source and a detector moving on a sphere centered at the source. This setup is even more appealing since no collimator is required neither at the source nor at the detector. This modality leads to a new toric Radon transform, resulting of the rotation of the circular arcs around the source-detector segment. The theoretical challenge raised by this extension consists in the inversion of this new toric Radon transform. Some work on this direction is on the way.

\section{Concluding remarks}
A new modality of Compton Scattering Tomography has been studied in this paper. The design has attractive features such as compactness and simplicity. Furthermore, this CST modality uses uncollimated detectors. This may allow increasing data and simultaneously reducing acquisition time. This modality leads to a new Radon transform on double circular arcs, for which a closed analytic inversion formula is established in this paper. In addition, an efficient reconstruction algorithm based on the Hilbert transform has been developed. Simulation results illustrate the feasibility of this new system and the good performance of the reconstruction algorithm. 


\section{Funding and acknowledgements}
C. Tarpau's research work is supported by grants from R\'egion Ile-de-France (in Mathematics and Innovation) 2018-2021 and LabEx MME-DII (Mod\`eles Math\'ematiques et Economiques de la Dynamique, de l'Incertitude et des Interactions) (No. ANR-11-LBX-0023-01). J. Cebeiro is supported a CONICET postdoctoral grant ($\#$ 171800).

\appendix

\section{Consistency conditions}
We introduce consistency conditions \cite{cormack84, truong2011radon} in terms of Cormack sense in order to deduce a closed formulation of \eqref{inv_gn}, more suitable for numerical computation. Equation (\ref{inv_gn}) can be rewritten using the n-th order Tchebychev polynomial of the first kind $T_n$  as 
\begin{equation}
        f_n(r) = \frac{1}{\pi} \frac{d}{dr}\int_R^r   \frac{T_{|n|}\left(r/\rho\right)}{\rho\sqrt{\left(r/\rho\right)^2-1}}\, \frac{(\mathcal{R}_{\mathcal{D}}f)_{n}(\rho)}{ 2\cos{\left(n\cos^{-1}(R/\rho)\right)}}d\rho
        \label{inv_gn_tch}, 
    \end{equation}
    
Using the following relationship between Tchebychev polynomials of the first kind $T_n$ and the second kind $U_n$ \cite{cormack84}
\begin{equation}
    \frac{T_{|n|}\left(r/\rho\right)}{\sqrt{\left(r/\rho\right)^2-1}}=\frac{\left(\left(r/\rho\right)-\sqrt{\left(r/\rho\right)^2-1}\right)^{|n|}}{\sqrt{\left(r/\rho\right)^2-1}} + U_{|n|-1}\left(r/\rho\right), 
     \label{eq:relation_tcheby}
\end{equation}

\noindent and according to (\ref{eq:Gn}), (\ref{inv_gn_tch}) becomes

\begin{equation}
   f_n(r) = \frac{1}{\pi} \frac{d}{dr} \left[ \int_R^r  \frac{\left(\left(r/\rho\right)-\sqrt{\left(r/\rho\right)^2-1}\right)^{|n|}}{\sqrt{\left(r/\rho\right)^2-1}}\,G_n(\rho)\frac{d\rho}{\rho} +\int_R^r U_{|n|-1}\left(r/\rho\right) \,G_n(\rho)\frac{d\rho}{\rho}\right].
        \label{inv_gn_tchU}   
\end{equation}

Furthermore, from (\ref{eq:g_n}), and changing the order of integration, one can obtain  for $n\in\mathbb{N}^\star$ and $k=n, n-2, n-4, ... >0$ 


\begin{equation}
    \int_R^\infty  G_n(\rho)\frac{d\rho}{\rho^k} =2\int_R^\infty  f_n(r)\int_r^\infty \frac{\cos{\left(n\cos^{-1}\left(r/\rho\right)\right)}}{\sqrt{1-\left(r/\rho\right)^2}}\frac{d\rho}{\rho^k}dr\label{eq:cons_init_change_order}. 
\end{equation}

With the change of variables $\rho = r/\cos{\nu}$, the previous $\rho$-integral becomes 
\begin{equation}
    \int_r^\infty \frac{\cos{\left(n\cos^{-1}\left(r/\rho\right)\right)}}{\sqrt{1-\left(r/\rho\right)^2}}\,\frac{d\rho}{\rho^k} =r^{-k+1}\int_0^{\frac{\pi}{2}} \frac{\cos{(n\nu)}}{\cos{(\nu)}^{2-k}}d\nu.
\end{equation}

We arrive to an $\nu$-integral whose result is given in \cite{magnus2013formulas}
\begin{equation}
    \int_0^{\frac{\pi}{2}} \frac{\cos{(n\nu)}}{\cos{(\nu)}^{2-k}}d\nu = \frac{\pi}{(k-1)2^{k-1}}\frac{\Gamma(k)}{\Gamma\left(\frac{k+n}{2}\right)\Gamma\left(\frac{k-n}{2}\right)},
\end{equation}
where $\Gamma$ refers to the gamma function. For $n>0$ and denoting $h(n)=\Gamma\left(\frac{k+n}{2}\right)\Gamma\left(\frac{k-n}{2}\right)$, one can remark that $k=n, n-2, n-4>0$ are poles of $h$. Since $h$ is even, then $k=-n, -n-2, -n-4<0$ for $n<0$ are also poles of $h$. Consequently, one can conclude that this integral vanishes \cite{truong2011radon}, i.e., for $n\in\mathbb{N}^\star$ and $k=n, n-2, n-4, ... >0$, 
    \begin{equation}
        \int_R^\infty \frac{(\mathcal{R}_{\mathcal{D}}f)_{n}(\rho)}{2 \cos{(n\psi)}}\frac{d\rho}{\rho^k} =0.\label{eq:vanish}
    \end{equation}

Furthermore, since Tchebychev polynomials of second kind $U_{|n|-1}(r/\rho)$ are a linear sum of polynomials $1/\rho^k$ (see \cite{magnus2013formulas}), \eqref{eq:vanish} leads to 
    \begin{equation}
        \int_R^\infty  U_{|n|-1}\left(\frac{r}{\rho}\right)\,\frac{(\mathcal{R}_{\mathcal{D}}f)_{n}(\rho)}{2 \cos{(n\psi)}} d\rho =0, n\in \mathbb{N}^\star.\label{eq:vanishU}
    \end{equation}

\section{Derivation of formula \eqref{eq:recons_f}}

This section uses consistency conditions established in \eqref{eq:vanishU} to obtain the final closed-form inversion formula \eqref{eq:recons_f}. From \eqref{eq:vanishU}, we deduce $\forall r\in [R, +\infty[,$
\begin{equation}
    \int_R^r  U_{|n|-1}\left(\frac{r}{\rho}\right)\,\frac{(\mathcal{R}_{\mathcal{D}}f)_{n}(\rho)}{2 \cos{(n\psi)}}d\rho= -\int_r^\infty \, U_{|n|-1}\left(\frac{r}{\rho}\right)\,\frac{(\mathcal{R}_{\mathcal{D}}f)_{n}(\rho)}{2 \cos{(n\psi)}}d\rho.\label{eq:consequence_vanishU}
\end{equation}

Consequently, using \eqref{eq:consequence_vanishU} and the relation between Tchebychev polynomials of the first and the second kind \eqref{eq:relation_tcheby}, we obtain 
\begin{equation}
        f_n(r) =\frac{1}{\pi} \frac{d}{dr}\left[ \int_R^r  \frac{\left(\left(r/\rho\right)-\sqrt{\left(r/\rho\right)^2-1}\right)^{|n|}}{\sqrt{\left(r/\rho\right)^2-1}}\frac{(\mathcal{R}_{\mathcal{D}}f)_{n}(\rho)}{2 \cos{(n\psi)}}\frac{d\rho}{\rho}-\int_r^\infty U_{|n|-1}\left(r/\rho\right)\frac{(\mathcal{R}_{\mathcal{D}}f)_{n}(\rho)}{2 \cos{(n\psi)}}\frac{d\rho}{\rho}\right]
        \label{eq:inv_stable}.
    \end{equation}
    
Then, with $f$ and $G$ the projections of circular harmonic components of $G_n$ and $f_n$ in (\ref{eq:inv_stable}), we obtain
\begin{multline}
    f(r,\theta) = \frac{1}{\pi}\frac{d}{dr} \left[ \int_R^r  \int_0^{2\pi} \frac{1}{2\pi\rho}\frac{G(\rho,\varphi)}{\sqrt{\left(r/\rho\right)^2-1}}\, \cdot\right.\sum_{-\infty}^{\infty}\left(\left(r/\rho\right)-\sqrt{\left(r/\rho\right)^2-1}\right)^{|n|} e^{in(\theta-\varphi)}d\varphi d\rho-\\ \left.\int_r^\infty \frac{1}{2\pi\rho}\int_0^{2\pi} G(\rho, \varphi)\, \sum_{-\infty}^{\infty} U_{|n|-1}\left(r/\rho\right) \,e^{in(\theta-\varphi)}d\varphi d\rho\right].
    \label{eq:previous_lemma}
\end{multline}

We need the two following lemmas to simplify \eqref{eq:previous_lemma}.
\begin{lemma}[\cite{truong2011radon}, Lemma 3.8]
For $1<\rho<r$, we have 
\begin{equation}
    1+2\sum_{n=1}^\infty\left(r/\rho-\sqrt{\left(r/\rho\right)^2-1}\right)^n\cos{(n(\theta-\varphi))}=\frac{-\sqrt{\left(r/\rho\right)^2-1}}{\cos{(\theta-\varphi)}-(r/\rho)}
\end{equation}
\end{lemma}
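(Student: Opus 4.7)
The identity is a generating-function statement in disguise, so the plan is to rewrite the cosine series as a two-sided geometric series in a complex variable, sum it in closed form, and then simplify the algebraic expression by exploiting the defining relation satisfied by the base of the series. To keep the notation light, set $x \coloneqq r/\rho$ and $\alpha \coloneqq \theta - \varphi$, and introduce $q \coloneqq x - \sqrt{x^2-1}$. The hypothesis $r > \rho > 1$ (so $x > 1$) is what matters here: it makes $\sqrt{x^2-1}$ real and, crucially, forces $0<q<1$ (since $q = 1/(x+\sqrt{x^2-1})$), which is exactly the condition that legitimates the geometric summation below. Also note the key algebraic identity $q$ satisfies: $q^2 - 2xq + 1 = 0$, which will collapse the messy denominator in the last step.

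First I would rewrite the series symmetrically as
\begin{equation*}
1 + 2\sum_{n=1}^{\infty} q^n \cos(n\alpha) \;=\; \sum_{n=-\infty}^{\infty} q^{|n|}\, e^{in\alpha},
\end{equation*}
split the right-hand side into the sum over $n\ge 0$ and $n<0$, and evaluate each as a geometric series in $qe^{i\alpha}$ and $qe^{-i\alpha}$ respectively (both of modulus $q<1$). After combining over the common denominator $|1 - qe^{i\alpha}|^2 = 1 - 2q\cos\alpha + q^2$, the standard Poisson-kernel computation yields
\begin{equation*}
1 + 2\sum_{n=1}^{\infty} q^n \cos(n\alpha) \;=\; \frac{1-q^2}{1 - 2q\cos\alpha + q^2}.
\end{equation*}

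The remaining work is purely algebraic and is where the defining relation for $q$ does all the heavy lifting. From $q = x - \sqrt{x^2-1}$ I would compute $1+q^2 = 2xq$ (equivalently, $q^2 - 2xq + 1 = 0$) and $1 - q^2 = 2q\sqrt{x^2-1}$. Substituting these two into the numerator and denominator gives
\begin{equation*}
\frac{1-q^2}{1 - 2q\cos\alpha + q^2} \;=\; \frac{2q\sqrt{x^2-1}}{2q(x-\cos\alpha)} \;=\; \frac{\sqrt{x^2-1}}{x-\cos\alpha} \;=\; \frac{-\sqrt{(r/\rho)^2-1}}{\cos(\theta-\varphi) - (r/\rho)},
\end{equation*}
which is the claimed identity. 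The only mild subtlety, and the one place I would double-check signs, is the reduction $1+q^2 = 2xq$: it is immediate from expanding $q^2$, but it is what makes the factor $q$ cancel between numerator and denominator, leaving no spurious poles in $x$. Everything else is routine, and the hypothesis $1<\rho<r$ is used precisely to ensure both that the geometric series converges ($|q|<1$) and that $\sqrt{x^2-1}$ is real.
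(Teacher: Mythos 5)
Your proof is correct: the Poisson-kernel summation and the two identities $1+q^2=2xq$ and $1-q^2=2q\sqrt{x^2-1}$ all check out, and the hypothesis $1<\rho<r$ is used exactly where you say it is (to get $0<q<1$ for convergence and a real square root). The paper itself gives no proof of this lemma --- it is imported verbatim from the reference of Truong and Nguyen --- so your argument supplies a short, self-contained verification by what is essentially the standard derivation of the Poisson kernel followed by the algebraic collapse coming from $q^2-2xq+1=0$; this is the same computation carried out in that reference, and there is nothing to add or correct.
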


\begin{lemma}[\cite{truong2011radon}, Lemma 3.9]For $0<r<\rho$, we have
\begin{equation}
    \sum_{n\in\mathbb{Z}} U_{|n|-1}\left(r/\rho\right)e^{in(\theta-\varphi)} = \frac{1}{\cos{(\theta-\varphi)-(r/\rho)}}
\end{equation}

\end{lemma}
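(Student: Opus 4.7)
Set $x := r/\rho \in (0,1)$ and $\alpha := \theta-\varphi$, and adopt the standard convention $U_{-1}\equiv 0$. Then the $n=0$ summand vanishes, and because $U_{|-n|-1}=U_{|n|-1}$ the remaining terms pair into a real cosine expansion
\[
\sum_{n\in\mathbb{Z}} U_{|n|-1}(x)\,e^{in\alpha}
\;=\; 2\sum_{n=1}^{\infty} U_{n-1}(x)\,\cos(n\alpha).
\]
The engine of the argument is the classical generating function for Chebyshev polynomials of the second kind, $\sum_{n\ge 0} U_n(x)\,t^n = (1-2xt+t^2)^{-1}$, valid on $|t|<1$ for $|x|\le 1$. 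Shifting the index by one gives
\[
\sum_{n=1}^{\infty} U_{n-1}(x)\,t^n \;=\; \frac{t}{1 - 2xt + t^2}.
\]

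I would then substitute $t=e^{i\alpha}$ and factor the denominator as
\[
1 - 2x\,e^{i\alpha} + e^{2i\alpha} \;=\; e^{i\alpha}\bigl(e^{-i\alpha}+e^{i\alpha}-2x\bigr) \;=\; 2\,e^{i\alpha}(\cos\alpha - x),
\]
so the ratio collapses to $\tfrac{1}{2(\cos\alpha - x)}$, a real number. Taking twice the real part (equivalently, adding the $n<0$ conjugate series) yields $\tfrac{1}{\cos\alpha - x}$, which is exactly the stated right-hand side once one substitutes back $x=r/\rho$ and $\alpha=\theta-\varphi$.

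\textbf{The main obstacle} is that the substitution $t=e^{i\alpha}$ lies on the boundary of the disk of convergence: since $U_n(\cos\beta)=\sin((n+1)\beta)/\sin\beta$ is only bounded in $n$, the series $\sum U_{n-1}(x)e^{in\alpha}$ does not converge absolutely on $|t|=1$, and in fact diverges at $\alpha=\pm\arccos x$, precisely where the closed form has its simple poles. I would handle this either by Abel summation (introduce $t=\tau e^{i\alpha}$ with $\tau\uparrow 1$, perform the above algebra at each $\tau<1$, and pass to the limit), or, more cleanly, by interpreting the identity as a distributional equality on the circle: compute the Fourier coefficients of $\alpha\mapsto 1/(\cos\alpha-x)$ directly by contour integration on $|z|=1$ using the substitution $z=e^{i\alpha}$ and the residue at the interior pole $z=x-i\sqrt{1-x^2}$, and check that they reproduce $U_{|n|-1}(x)$. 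Because the lemma is applied inside the $\varphi$-integral against the smooth projection $G(\rho,\varphi)$ in \eqref{eq:previous_lemma}, the distributional sense is exactly what is required, and a standard dominated-convergence argument legitimises interchanging the Abel limit with the outer integration.
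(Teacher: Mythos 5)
The paper does not prove this lemma at all --- it is imported verbatim from Truong and Nguyen (2011), so there is no internal proof to compare against; your argument via the generating function $\sum_{n\ge 0}U_n(x)t^n=(1-2xt+t^2)^{-1}$, the factorisation $1-2xe^{i\alpha}+e^{2i\alpha}=2e^{i\alpha}(\cos\alpha-x)$, and symmetrisation over $\pm n$ is exactly the standard derivation used in that reference, and your algebra is correct. You also correctly identify the one genuine subtlety, namely that $t=e^{i\alpha}$ sits on the boundary of the disc of convergence and the series only holds in the Abel or distributional sense; since the identity is consumed inside the $\varphi$-integral against $G(\rho,\varphi)$ in \eqref{eq:previous_lemma}, that is indeed sufficient. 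One small correction to your proposed alternative verification: with $z=e^{i\alpha}$ the denominator becomes $(z^2-2xz+1)/(2z)$, whose roots $z=x\pm i\sqrt{1-x^2}$ satisfy $|z|=1$ for $x\in(0,1)$ --- they lie \emph{on} the unit circle, not inside it, consistent with your own observation that the closed form has poles at $\alpha=\pm\arccos x$. So the contour computation of the Fourier coefficients cannot proceed by a simple interior residue; it requires an indented contour or a principal-value evaluation with half-residues at both boundary poles. This does not affect your main (Abel-summation) route, which stands as a complete and correct proof.
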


Hence, we obtain :
\begin{equation}
    f(r,\theta) =\\\frac{1}{2\pi^2} \int_0^{2\pi}\frac{d}{dr}\,\hbox{p.v.}\left[\int_R^\infty  \frac{ G(\rho, \varphi)}{r-\rho\cos{(\theta-\varphi)}}d\rho\right]d\varphi.
    \label{eq:previous}
\end{equation}
Then, \eqref{eq:recons_f} is obtained first inverting the order of the $r-$derivative and the $\rho$-integral and then with the substitution $u=\rho/r$ and the fact that $G(\rho, \varphi)=0$ for $\rho\in[0,R[$. 



\begin{remark}
The intermediate result \eqref{eq:inv_stable} is also an inversion formula based on circular harmonic expansion. However, the proposed formulation \eqref{eq:recons_f} is more practical for numerical reconstruction than \eqref{eq:inv_stable}. In fact, \eqref{eq:inv_stable} can be numerically implemented using a method similar to Chapman and Cary numerical approach \cite{chapman1986circular}, see for instance \cite{num_inv}. Nevertheless, the technique requires the evaluation of Tchebychev functions in \eqref{eq:inv_stable} in terms of primitive integrals that are evaluated recursively. This implies either longer computational time or more memory.
\end{remark}

\bibliographystyle{IEEEtran}
{\footnotesize
\bibliography{IEEEabrv,biblio}}
\end{document}